\documentclass[12pt]{amsart}
\usepackage{amssymb}
\usepackage{mathrsfs}
\usepackage{graphicx}
\usepackage{comment}

\newcommand\Q{{\mathbb Q}}
\newcommand\R{{\mathbb R}}
\newcommand\Z{{\mathbb Z}}
\newcommand\N{{\mathbb N}}

\newcommand\eps{\varepsilon}

\newtheorem{theorem}{Theorem}
\newtheorem{lemma}[theorem]{Lemma}

\newtheorem{proposition}[theorem]{Proposition}
\newtheorem{corollary}[theorem]{Corollary}

\theoremstyle{definition}

\theoremstyle{remark}

\title[Subsets closed under addition or subtraction]{Subsets of abelian groups closed under addition or subtraction}

\begin{document}

\author{Art\= uras Dubickas}
	\address{Institute of Mathematics, Faculty of Mathematics and Informatics, Vilnius University, Naugarduko 24,
		LT-03225 Vilnius, Lithuania}
	\email{arturas.dubickas@mif.vu.lt}
	
	\author{Chris Smyth}
	\address{
    School of Mathematics, University of Edinburgh, Edinburgh, EH9 3FD, Scotland, UK}
	\email{c.smyth@ed.ac.uk}

	\subjclass[2020]{05B10, 11B13, 20M14, 20M75}
	
	\keywords{Integer sequence, additive semigroup, abelian group, one-subset}
	
	\begin{abstract}
		In this article, we first describe all nonempty sets
        of integers $S$ with the property that 
        for all $n$ and $m$ in $S$, not necessarily distinct, the set $\{n-m,n+m\}\cap S$ consists of a single element. These are the sets with at most two elements, one of which is zero, and the infinite sets $\{rk\}$,
        where $r$ is a fixed positive integer and $k$ runs over all integers not divisible by $3$. 
              In the later sections, we solve the analogous problem for subsets of abelian groups. We also discuss, but do not completely solve, the analogous problem for nonabelian groups.
      	\end{abstract}
	
	\maketitle

\section{Introduction} Let $G$ be a given additive abelian group. This paper is concerned with the classification of nonempty subsets $S$ of $G$ with the property that, for all $g,h\in S$, not necessarily distinct, either
\begin{itemize}
\item exactly one of $g+h$ and $g-h$ lies in $S$

or
\item $g+h=g-h\in S$.
\end{itemize}
 We call such a subset $S$ of $G$ a {\it one-subset of $G$}. Thus $S$ is a one-subset of $G$ if and only if
 \[
\#(\{g+h,g-h\}\cap S)=1 \text{ for each $g,h\in S$.}
 \]

 The idea of studying such sets came from root systems, where, within a root system, we noted   the nontrivial conditions on two roots for their sum or difference to also be roots. See for instance \cite[Cor.8.7]{hal}. We decided to look for sets where the possibilities for addition or subtraction within the set were simpler.

 Before giving the classification of one-subsets of general abelian groups (see Section 4), we first consider the special case where the group $G$ is the additive group of integers $\Z$ (see Section 2).  The proof in this case given in Section 3 makes use of the ordering of the integers. The classification proof for general abelian groups is necessarily quite different.

 In   Section \ref{S-ex}, we give some examples of one-subsets for a selection of different abelian groups. In Section 6, we construct some analogous one-subsets of nonabelian groups, without proving that we have classified them all. Finally, in Section \ref{S-non-gen}, we briefly discuss some abelian groups that have no one-subsets that generate them.

\section{One-subsets of the integers}
 Before classifying the one-subsets of the integers $\mathbb Z$ in Theorem \ref{ss1} below, we give some additional background for this case.
 
Let $S$ be a nonempty set of integers. If $S$ is closed under subtraction, namely, 
\begin{equation}\label{ss10}
S-S \subseteq S,\end{equation} which means that for any $n,m \in S$
the difference $n-m$ belongs to $S$, then it is easy to see that either 
$S=\{0\}$ or $S=g \Z$ for some positive integer $g$. 
Here and below, $\Z$ stands for the set of all integers and 
$g \Z= \{gk \>|\> k \in \Z\}$. 

Indeed, assume that $S$ contains a nonzero element, say,
$g$. Then $0=g-g \in S$ and $-g=0-g \in S$. 
Hence $2g=g-(-g) \in S$ and similarly $-2g \in S$.
Continuing in this way, we deduce $g \Z \subseteq S$. 
In particular, this implies that $S$ contains a positive integer. Let $g$ be the smallest positive integer in $S$. Then  $g\Z \subseteq S$, and we claim that
$S=g\Z$.
For a contradiction, suppose that $S$ contains some element $h$, not divisible by $g$ (which can be possible only if $g>1$). As above, since $|h| \in S$, we can assume that $h>0$ and so $h>g$ due to the minimality of $g$. But then, writing  $h=kg+u$, with $k \in \N$
and $0<u<g$, from $h, kg \in S$  we derive by \eqref{ss10} that $u=h-kg \in S$, which contradicts the minimality of $g$. 

If the set $S$ is closed under addition, namely, 
\begin{equation}\label{ss5}
S+S \subseteq S,
\end{equation}
which means that
for any $n,m \in S$
the sum $n+m$ belongs to $S$, then $S$
is called an {\it additive semigroup} of integers. The structure of such sets is nontrivial and cannot be characterized in a simple way similar to the case of integer sets closed under subtraction. 

In fact, there is a considerable literature related to the additive semigroups $S \subseteq \N \cup \{0\}$ containing zero. In particular, if no integer $g>1$ divides all elements of $S$, then, by a result of Higgins \cite[Theorem 1]{hig},  the set $S$ satisfying \eqref{ss5} must contain all sufficiently large positive integers. 
Some further description of such $S$ was also independently given in \cite{dim} (see also \cite{ang}). 
Such sets are called {\it numerical semigroups}. See, for instance, the monographs \cite{assi}, \cite{how}, \cite{ros}
and the papers \cite{abu}, \cite{bras}, \cite{kap}.
The largest positive integer which does not belong to the numerical semigroup $S$ is called its {\it Frobenius number}, while the cardinality of the set $\N \setminus S$ is called its {\it genus}. 
There is a considerable literature devoted to the study of various questions related to those notions;
see, for instance, some very recent papers \cite{ar}, \cite{bog}, 
\cite{bran}, \cite{cis}, \cite{ell}, \cite{elm}, \cite{gar}, \cite{jim}, \cite{kom}, \cite{lee}, \cite{oje}.

By the above, each set of integers $S$ satisfying \eqref{ss10} automatically satisfies \eqref{ss5}.
Now, we will consider the sets of integers $S$
such that for all $n,m \in S$ (possibly $n=m$) {\it exactly one} of the numbers $n-m$, $n+m$ belongs 
to the set $S$. (Note that they are both equal if and only if $m=0$.) Recall that in this paper such $S$ is called a one-subset of $\Z$. 
It turns out that all such sets also have a very simple structure and can be fully characterized. 

\begin{theorem}\label{ss1} Let $S$ be a nonempty one-subset of the integers $\mathbb Z$.
Then one of the following is true:
\begin{itemize}
\item[$(i)$] $S=\{0\}$;
\item[$(ii)$] $S=\{0,g\}$ for some integer $g \ne 0$;
\item[$(iii)$] $S=g \Z \setminus 3g\Z$  for some $g \in \N$.
\end{itemize}
Conversely, each of the sets $S$ described in $(i), (ii), (iii)$ is a one-subset of $\mathbb Z$. 
\end{theorem}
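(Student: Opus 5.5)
The plan is to first extract some elementary consequences of the defining property and then use the ordering of $\Z$ to pin down the structure. Taking $n=m\in S$, exactly one of $0$ and $2n$ lies in $S$ (or $n=0$). Since $S$ is nonempty, pick $n\in S$. If $n=0$ we already have $0\in S$. If $n\ne 0$, suppose $0\notin S$; then $2n\in S$, and applying the property to the pair $(n,2n)$ forces exactly one of $3n$ and $-n$ into $S$. I would show directly that $0\in S$ always: with $m=n$, the condition forces exactly one of $0,2n$ in $S$. Using the pair $(2n,n)$ and iterating, one should reach a contradiction (for instance, the orbit $n,2n,4n,\dots$ under doubling would otherwise have to avoid $0$ forever, and pairing $2^kn$ with $2^{k+1}n$ produces $-2^kn$ or $3\cdot 2^kn$ inconsistently). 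This is the step I would check first, and it may need a small case analysis.

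Once $0\in S$, the pair $(n,n)$ gives $2n\notin S$ for every nonzero $n\in S$. The pair $(0,n)$ gives exactly one of $n,-n$ in $S$ unless $n=-n$. So the property $\#\{-n,n\}\cap S=1$ for $(0,n)$ needs care: since $0-n=-n$ and $0+n=n\in S$, it says $-n\notin S$ \emph{or} $-n=n$. Symmetrically, from the pair $(n,0)$ both values equal $n$, which is fine. Hence if $S\ne\{0\}$, either $S=\{0,g\}$, which gives (ii), or $S$ contains two nonzero elements.

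For the main case, take nonzero $a,b\in S$. The relation $-n\notin S$ combined with the pair $(a,b)$ forces exactly one of $a\pm b$ into $S$. I would let $g$ be the element of least absolute value among nonzero elements, normalized by the $\pm$ symmetry, and prove by induction on $|x|$ that $S$ restricted to $g\Z$ is exactly $g\Z\setminus 3g\Z$, and that $S\subseteq g\Z$ via a Euclidean-type reduction. In the reduction, if $h\in S$ is not a multiple of $g$, repeatedly apply the pair property with $g$ to produce $h\pm g\in S$ with smaller remainder, eventually giving a nonzero element smaller than $|g|$.

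I expect the main obstacle to be reconciling this with the claimed answer. The sign condition derived above (at most one of $\pm n$ in $S$) seems incompatible with (iii), which is symmetric under negation. So the precise reading of the definition when $m=0$ and when $n=-m$ must be rechecked before running the induction: for $n=0$ and $m\in S$, the set $\{-m,m\}$ meeting $S$ in one element contradicts $S=-S$. This suggests that (iii) is meant to exclude $0$. Indeed $g\Z\setminus3g\Z$ does not contain $0$, and then $(n,n)$ gives $2n\in S$, consistent. So the correct case split is whether $0\in S$. If $0\in S$, then $S\subseteq\{0,g\}$. If $0\notin S$, the pair $(n,n)$ gives $2n\in S$ and the pair $(n,-n)$ gives $S=-S$ or the reverse. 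Then I would run the Euclidean descent with $g=\min(S\cap\N)$, show $3g\notin S$ (from $(2g,g)$: $g\in S$, so $3g\notin S$), and finish by induction on $|x|$ using pairs $(x-g,g)$ and $(x-2g,2g)$ to decide membership mod $3$. The converse is a direct check: if $3\nmid n,m$, exactly one of $n\pm m$ is prime to $3$.
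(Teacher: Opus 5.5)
There is a genuine gap in the case $0\in S$. You first try to show that such an $S$ agrees with $g\Z\setminus 3g\Z$ on $g\Z$, which is impossible when $0\in S$. After abandoning that, you simply assert that $0\in S$ implies $S\subseteq\{0,g\}$. The only facts you derive for this case are $2n\notin S$ and $-n\notin S$ for nonzero $n\in S$, from the pairs $(n,n)$ and $(0,n)$. These cannot exclude a third element: $\{0,1,3\}$ satisfies both and is eliminated only by the pair $(1,3)$.

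You need an argument using two nonzero elements. Let $a\ne b$ be nonzero elements of $S$. Then $a-b$ and $b-a$ are nonzero negatives of each other, so at most one lies in $S$. If $a-b\in S$, the pair $(a,b)$ excludes $a+b$, so the pair $(b,a)$ forces $b-a\in S$, which is impossible; the case $b-a\in S$ is symmetric. Hence $a+b\in S$, and $a+b\neq 0$ because $-a\notin S$. Now $(a+b)-a=b\in S$ gives $2a+b\notin S$, while $a-(a+b)=-b\notin S$ gives $2a+b\in S$, a contradiction. The paper reaches the same conclusion using the ordering instead. It takes $g>0$ of least absolute value (after possibly replacing $S$ by $-S$) and $h\in S\setminus\{0,g\}$ of least absolute value, and derives contradictions separately for $h>g$ and $h<-g$.

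Your skeleton for the case $0\notin S$ matches the paper's, but several smaller points need fixing.

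\begin{itemize}
\item Your opening attempt to show $0\in S$ always targets a false statement. The set $\Z\setminus 3\Z$ is a one-subset without $0$, and along the doubling orbit the pairs $(2^kn,2^{k+1}n)$ consistently give $-2^kn\in S$ and $3\cdot 2^kn\notin S$. Delete this attempt rather than repairing it.
\item The claim that ``the pair $(n,-n)$ gives $S=-S$'' is circular, since it presupposes $-n\in S$. The correct order, whose pieces you already have, is $(2n,n)\Rightarrow 3n\notin S$, then $(n,2n)\Rightarrow -n\in S$.
\item Your induction deciding membership of multiples of $g$ by residue mod $3$, using the pairs $(x-g,g)$ and $(x-2g,2g)$, is exactly the paper's argument and works.
\item The descent showing $S\subseteq g\Z$ fails as described. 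The pair $(h,g)$ may put $h+g$ rather than $h-g$ into $S$, and shifting by $g$ never changes the residue mod $g$.
\end{itemize}

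The fix for the descent is a step with $2g$. Take $h$ to be the least positive element of $S$ outside $g\Z$. Minimality gives $h-g\notin S$, hence $h+g\in S$. The pair $(h+g,g)$ then gives $h+2g\notin S$, so the pair $(h,2g)$ gives $h-2g\in S$. Since $-g<h-2g<h$ and $h-2g\notin g\Z$, this contradicts the minimality of $g$ or of $h$, using $S=-S$ if $h-2g<0$.
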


The proof of Theorem~\ref{ss1} is elementary, but not as straightforward as in the case of integers sets satisfying \eqref{ss10}. It is given in the next section.

\section{Proof of Theorem~\ref{ss1}}

In our proof we make frequent use of the fact that for $n,m\in S$ with $m\ne 0$ exactly one of the integers $n-m$ and $n+m$ lies in $S$.

	For a given integer $g \ne 0$, we define
	\[F_g:=\{0,g\}\]
	and 
	\[I_g:=\{gk \>|\> \text{$k$ runs over all integers not divisible by 3}\}=g\Z \setminus 3g \Z.\] 
	We need to show that all possible sets satisfying the condition of the theorem are $S=\{0\}$ (case $(i)$), $S=F_g$ for some integer $g \ne 0$ (case $(ii)$), or $S=I_g$ for some integer $g \geq 1$ (case $(iii)$). Note that $I_g=I_{-g}$.
	
	The sets $S=\{0\}$ and $S=F_g$ clearly satisfy the condition of the theorem. To verify this property for $S=I_g$, where $g \geq  1$ is an integer, assume that $n,m \in I_g$. Then  
	there are integers $u,v$ not divisible by $3$ such that $n=ug$ and $m=vg$.
	Exactly one of the integers 
	$u-v, u+v$ is divisible by $3$, so exactly one of the numbers
	$n-m=(u-v)g$, $n+m=(u+v)g$ belongs to $I_g$. Thus for any   $n,m \in I_g$ the set $\{n-m,n+m\} \cap I_g$
	consists of a single element, and hence the set $I_g$ satisfies the property described in condition of the theorem too.
	
Let $S$ be a nonempty set that satisfies the required property.
Assume first that $0\in S$. 
If $S$ contains at most one other element, then $S=\{0\}$ or $S=F_g$ for some
$g \ne 0$. Each of those sets satisfies the condition of the theorem.
We will show that in the case when $0 \in S$ there are no other such sets. 

Note that if $S$ satisfies the condition of the theorem, then so does also the set $-S$. Hence, by replacing $S$ by $-S$ if necessary, we can assume that $S$ contains an element $g$ that is both positive and an element of the smallest absolute value in $S$. Since $S \ne F_g$, it must contain an element outside the set
$\{-(g-1),-(g-2),\dots,g\}$, say $h$. Without loss of generality, we may assume that $h$ has the smallest absolute value among all elements
of $S \setminus F_g$. Since $0,g \in S$ and $0+g=g \in S$ we must have $0-g=-g \notin S$, so $|h|>g$. 
 This implies that either $h>g$ or $h<-g$. Note that $0,h,0+h \in S$, so $0-h=-h \notin S$. 
\begin{itemize}
\item The case $h>g$. Then $g$ and $h$ are two smallest positive elements of $S$, so $h-g\notin S$, which yields $h+g\in S$. Now
$
g-(h+g)=-h\notin S$, 
so that $g+(h+g)=h+2g\in S$. However,
$
(h+g)-g=h\in S$,
so $(h+g)+g=h+2g \notin S$, a contradiction.
\item  The case $h<-g$. Then $h$ is the largest negative element of $S$.
 As $h<h+g<0$, we have $h+g\not\in S$, and hence $h-g\in S$. Then
$
h-(h-g)=g\in S$,
so that $h+(h-g)=2h-g\not\in S$. From
$
(h-g)-h=-g\not\in S
$
we get $(h-g)+h=2h-g\in S$, a contradiction again.
\end{itemize}
Hence $S$ containing $F_g$ cannot have a third element $h$.

In all that follows, we shall consider the case where $S$ satisfies the condition of the theorem and $0 \notin S$. 
We first note that
\begin{itemize}
	\item[(i)]  If $n\in S$, then $2n \in S$ and $3n \notin S$.
	\item[(ii)] $S=-S$.
\end{itemize}
Indeed, since $n \in S$ and $0=n-n \notin S$, we have $2n=n+n \in S$.
From $2n-n=n \in S$ it follows that $2n+n =3n \notin S$, which gives (i).  
Also, for each $n \in S$, from $n+2n \notin S$ (see (i)) we deduce $-n=n-2n \in S$, which gives (ii).

Since $S$ is nonempty, in view of $0 \notin S$ and (ii), $S$ has
a positive element. Let $g \in S$
be the smallest positive element of $S$. 
Our solution will be complete when we have shown that $S=I_g$. 

We first claim that $I_g \subseteq S$ and that no other elements in $S$ except for those in $I_g$ are divisible by $g$. To prove this,
by (ii), $0 \notin S$ and the definition of $I_g$, it suffices to show that for every integer $k \geq 1$
\[kg \in S \quad 
 \iff \quad 3 \nmid k.\] By (i) and $g \in S$, we have $g,2g \in S$ and $3g \notin S$, so the above property holds for $k=1,2,3$. Assume that $k \geq 4$ is the least integer for which the above property fails to hold. If $3 \mid k$,
then this means that $kg \in S$. But then from $g, (k-1)g \in S$ and $(k-1)g-g=(k-2)g \in S$, we obtain
$(k-1)g+g=kg \notin S$, a contradiction. If $3 \nmid k$, then this means that
$kg \notin S$. Let $l \in \{1,2\}$ be the integer for which $3 \nmid (k-l)$.
Then $3 \mid (k-2l)$. From $(k-l)g, lg \in S$ and $(k-l)g-lg=(k-2l)g \notin S$
it follows that $(k-l)g+lg=kg \in S$, a contradiction. Consequently, such 
$k \in {\mathbb N}$ does not exist. 

By the above, if $S$ contains some element different from those in $I_g$,
then this element is not divisible by $g$. Assume that such an element exists and let $h$ be the smallest such element. By (ii) and the choice of $g$, we can assume that $h>g>0$, where
$g \nmid h$.
Then $h-g>0$ cannot be in $S$ and less than $g$ (that would contradict the definition of $g$) or be in $S$ and be greater than $g$ (that would contradict the definition of $h$). Also, $h-g \ne g$, since $g \nmid h$. Hence $h-g\not\in S$ and therefore $h+g\in S$. Then
$
(h+g)-g=h\in S$,
so $(h+g)+g=h+2g\not\in S$. Since $h,2g\in S$, this implies
$h-2g\in S$.
Now $h-2g \notin \{-g,0,g\}$ and $h-2g>g-2g=-g$. If $h-2g>g$, then $h-2g \in S$ contradicts the definition of $h$, while if $g>h-2g>-g$, then $|h-2g| \in S$ contradicts the definition of $g$. Hence such an $h$ does not exist, and therefore $S=I_g$.

\section{Generalisation to abelian groups}

In an additive abelian group $G$ containing one-subset $S$,
from the definition of one-subset we have two possibilities: for $g,h\in S$ either $2h\ne 0$ and exactly one of $g-h$ and $g+h$ is in $S$, or $2h=0$ and $g-h=g+h\in S$. For example, this latter possibility occurs when $h=0$.

 We also define an {\it involution subgroup}
$I$ of $G$ to be a subgroup of $G$ such that all nonzero elements of $I$ are involutions (elements that are their own inverses). For example, for $G=\Z$, there are no nonzero involutions, so the only possibility is $I=\{0\}$.

\subsection{One-subsets that contain $0$ in abelian groups } 

We first describe all possible one-subsets $S$, where $0 \in S$, of an abelian group $G$. 

\begin{theorem}\label{Inv} Let $G$ be an abelian group with $S$ a one-subset of $G$ containing $0$. Then one of the following is true:
\begin{itemize}
\item[$(i)$] $S$ is an involution subgroup $I$ of $G$;
\item[$(ii)$] $S=I\cup (g+I)$, where $I$ is an involution subgroup of $G$, with $g\in G$ such that $2g\not\in I$.
\end{itemize}
Conversely, each of the sets $S$ described in $(i), (ii)$ is a one-subset of $G$. 
\end{theorem}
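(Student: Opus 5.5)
The plan is to isolate the elements of $S$ of order dividing $2$ and show they form the subgroup $I$, and then show that the remaining elements fill out at most one coset of $I$. Throughout I use the definition in the form: for $g,h\in S$, if $2h=0$ then $g+h=g-h\in S$, and if $2h\neq 0$ then exactly one of $g\pm h$ lies in $S$.

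First put $I:=\{h\in S : 2h=0\}$, which contains $0$. For $a,b\in I$ we have $a+b=a-b$, so this common element lies in $S$. Since $2(a+b)=0$, it lies in $I$, and $-a=a$, so $I$ is an involution subgroup. More generally, for any $g\in S$ and $i\in I$ we get $g+i=g-i\in S$. Hence $S+I\subseteq S$, so $S$ is a union of cosets of $I$. If $S=I$ we are in case $(i)$.

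Otherwise let $T:=S\setminus I\neq\emptyset$, and fix $g\in T$, so $2g\neq 0$. Two facts follow at once. From the pair $(0,g)$, since $0+g=g\in S$, we get $-g\notin S$. From the pair $(g,g)$, since $g-g=0\in S$, we get $2g\notin S$, and in particular $2g\notin I$.

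The main step is to show that every $h\in T$ lies in $g+I$. Suppose instead that $g-h\notin I$; I would split into two cases.
\begin{itemize}
\item[(a)] If $g-h\in S$, then $g-h\in T$, so by the fact above $h-g=-(g-h)\notin S$. The pair $(h,g)$ then forces $h+g\in S$. Now the pair $(g,h)$ has both $g+h$ and $g-h$ in $S$, which is impossible since $2h\neq 0$.
\item[(b)] If $g-h\notin S$, then the pair $(g,h)$ gives $g+h\in S$. From the pair $(g+h,g)$: since $(g+h)-g=h\in S$, we get $2g+h\notin S$. From the pair $(g,g+h)$: since $g-(g+h)=-h\notin S$, we get $2g+h\in S$. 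This is a contradiction.
\end{itemize}
This case analysis is the only place needing care, namely choosing which ordered pairs to test. I expect it to be the crux, but it mirrors the argument already used for $\Z$ in Section 3. Together with $S+I\subseteq S$, it gives $S=I\cup(g+I)$ with $2g\notin I$, which is case $(ii)$.

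For the converse, an involution subgroup is clearly a one-subset, since $a\pm b=a+b\in I$ for $a,b\in I$. For $S=I\cup(g+I)$ with $2g\notin I$, first note that $g\notin I$ and $-g\notin S$, since either $-g\in I$ or $-g\in g+I$ would force $2g\in I$. Every element $h=g+j$ of the nontrivial coset has $2h=2g\neq 0$. I would then check the pairs one by one.
\begin{itemize}
\item A pair with second entry in $I$ gives $x+i=x-i\in S$.
\item The pair $(i,g+j)$ has $i+g+j\in S$ and $i-g-j\in -g+I$, which is disjoint from $S$.
\item The pair $(g+i,g+j)$ has difference in $I\subseteq S$ and sum in $2g+I$, which is disjoint from both $I$ and $g+I$ because $2g\notin I$ and $g\notin I$.
\end{itemize}
So in every case exactly one of $x\pm y$ lies in $S$, or $x+y=x-y\in S$, as required.
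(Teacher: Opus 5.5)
Your proposal is correct and follows essentially the paper's proof: you take $I$ to be the involutions in $S$, use the same facts $-g\notin S$ and $2g\notin S$, reach the same $2g+h$ contradiction, and check the converse pair by pair in the same way. Your split into cases (a) and (b) on whether $g-h\in S$ simply spells out the paper's terse step ``one of $g-h$ and $h-g$ is not in $S$, so that $g+h\in S$'', and using $S+I\subseteq S$ up front matches the paper's final observation that $g+i=g-i\in S$.
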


\begin{proof} Let $I$ be an involution subgroup of $G$. 
From $i+h=i-h=h-i \in I$ for $i,h\in I$, we see that 
$S=I$ is a one-subset of $G$. 
To prove that $S$ as in $(ii)$ is a one-subset of $G$, take any involution subgroup $I$ of $G$ and any $g\in G$ with $2g\not\in I$. Then $g\not\in I$, since $I$ is a group. 
We want to show that $I \cup (g+I)$ is a one-subset of $G$.
Consider the elements $g+i$ and $g+i'$ in $g+I$. Then $(g+i)-(g+i')=i-i'\in I\subset S$,
but $(g+i)+(g+i')=2g+i+i'\not\in I$, as otherwise $(2g+i+i')+(i+i')=2g$ would be in $I$, which it isn't. Hence $(g+i)+(g+i')\not\in S$, since it does not belong to $g+I$ either. Also note that if $i\in I$ and $g+i'\in g+I$ then $(g+i')+i=(g+i')-i \in g+I$, and $i-(g+i')\not\in I$, for otherwise $i-(g+i')-i+i'=-g$ would also be in $I$, which it isn't.  Hence $i-(g+i')\not\in S$, since it does not belong to $g+I$ either. Thus 
$S=I\cup (g+I)$ is a one-subset of $G$.

It remains to show that no other options are possible. 
 Since $0\in S$ by assumption,  
the set of involutions $I$ of $S$ is nonempty. 
Furthermore, since for any $i,j \in I \subseteq S$ we have
$i+j=i-j=j-i \in S$ and $i+j$ is an involution thanks
 to $i+j=-i-j$, the set $I$ is a subgroup of $G$. 
There is nothing to prove if $S=I$, so
 suppose that a one-subset $S$ of $G$ contains 
 an element $g \in S \setminus I$.  Then $0=g-g \in I \subseteq S$, so
 $2g=g+g$ is either $0$ or not in $S$. However, $2g \ne 0$, since $g$ is not an involution, which yields
$2g \notin S$. (In particular, this implies $2g \notin I$.)
From $0+g=g\in S$ we have $0-g=-g\not\in S$, as $-g\ne g$. 
Hence $-g'\not\in S$ for any $g'\in S\setminus I$. 

Assume that $S \setminus (I \cup \{g\})$ contains another element $h$. 
If $h-g\not\in I$,  then one of $g-h$ and $h-g$ is not in $S$, so that $g+h\in S$. But then $g-(g+h)=-h\not\in S$, so that $g+(g+h)=2g+h\in S$,
 and $(g+h)-g=h\in S$, giving $(g+h)+g=2g+h\not\in S$, a contradiction.
  Hence $h-g\in I$, so that $h\in g+I$.
  Consequently, for any $g \in S \setminus I$, we must have $I \cup \{g\} \subseteq S \subseteq I\cup (g+I)$.
  But for each $i \in I$ we have
  $g+i=g-i$, so $g+i \in S$, and hence $S=I\cup (g+I)$.
\end{proof}

Theorem~\ref{Inv} implies the following corollary:

\begin{corollary}\label{noinv} If an abelian group $G$ contains no involutions except for $0$ then its only one-subsets containing $0$ are $\{0\}$ and $\{0,g\}$ for any nonzero $g\in G$. At the other extreme, if all nonzero elements of $G$ are involutions, then all subgroups of $G$ are one-subsets.
\end{corollary}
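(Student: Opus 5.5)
The corollary follows directly from Theorem~\ref{Inv}, by specialising the involution subgroup $I$ in each of the two extreme cases.

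\emph{No nonzero involutions.} Suppose the only involution in $G$ is $0$. Then the only involution subgroup of $G$ is $I=\{0\}$, since every nonzero element of an involution subgroup is an involution. Let $S$ be a one-subset containing $0$. By Theorem~\ref{Inv}, either $S=I=\{0\}$, or $S=I\cup(g+I)=\{0,g\}$ with $2g\notin I$, that is, $2g\neq 0$. The condition $2g\ne 0$ holds for every nonzero $g$, because a nonzero $g$ with $2g=0$ would be a nonzero involution. So the condition reduces exactly to $g\neq 0$. Conversely, the same theorem shows that $\{0\}$ and $\{0,g\}$ are one-subsets for every nonzero $g$. This gives precisely the stated list.

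\emph{Every nonzero element an involution.} Now suppose every nonzero element of $G$ is an involution. Then every subgroup $H$ of $G$ is an involution subgroup, since all of its nonzero elements are involutions. By the converse part of Theorem~\ref{Inv}, case $(i)$, each such $H$ is a one-subset. One can also check this directly: for $h,h'\in H$ we have $h+h'=h-h'\in H$, so $\#(\{h+h',h-h'\}\cap H)=1$.

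There is no real obstacle here. The only point needing care is in the first part: the side condition $2g\notin I$ becomes $g\neq 0$ exactly because $G$ has no nonzero elements of order $2$.
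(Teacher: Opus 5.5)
Your proposal is correct and takes the same approach as the paper, which simply states that the corollary follows from Theorem~\ref{Inv}. You have spelled out the two specialisations: $I=\{0\}$, where $2g\notin I$ reduces to $g\neq 0$, and the observation that every subgroup is an involution subgroup when all nonzero elements are involutions.
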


In the next subsection, we will describe one-subsets $S$ of an abelian group $G$ with the property $0 \notin S$. 

\subsection{Properties of one-subsets not containing $0$ in an abelian group}

\begin{lemma}\label{L-trans} Suppose that $S$ is a one-subset of some abelian group $A$,  and $0\not\in S$. Suppose that $a,b,c,a+b$ and $b+c$ are all in $S$. Then $a+c\in S$. \end{lemma}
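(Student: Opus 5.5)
The plan is to argue by contradiction: assume $a+c\notin S$, and extract three ``exactly one'' conditions that cannot hold simultaneously.

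First I will record the basic facts for a one-subset $S$ with $0\notin S$, exactly as in the integer case of Theorem~\ref{ss1}. Take any $n\in S$. If $2n=0$, then $n+n=n-n=0$ would have to lie in $S$, which is impossible. So $S$ contains no involutions and no zero. Hence for every $m\in S$ we have $m+m\ne m-m$, and therefore for all $n,m\in S$ \emph{exactly one} of $n+m$ and $n-m$ lies in $S$. Next, $n-n=0\notin S$ gives $2n\in S$. Then $2n-n=n\in S$ gives $3n\notin S$. Finally $n+2n\notin S$ forces $n-2n=-n\in S$, so $S=-S$.

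Now suppose, for a contradiction, that $a+c\notin S$. Applying the exactly-one property to the pair $a,c\in S$ gives $a-c\in S$. Introduce the three elements
\[
p=a+b-c,\qquad q=b+c-a,\qquad r=a+b+c .
\]
I will apply the exactly-one property to three pairs of elements of $S$:
\begin{itemize}
\item the pair $b$ and $a-c$: its sum is $p$ and its difference is $b-(a-c)=q$, so exactly one of $p,q$ lies in $S$;
\item the pair $a+b$ and $c$: its sum is $r$ and its difference is $p$, so exactly one of $r,p$ lies in $S$;
\item the pair $b+c$ and $a$: its sum is $r$ and its difference is $q$, so exactly one of $r,q$ lies in $S$.
\end{itemize}

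From the last two conditions, membership of $p$ in $S$ is equivalent to non-membership of $r$, which is equivalent to membership of $q$. So $p\in S$ if and only if $q\in S$. This contradicts the first condition, so $a+c\in S$.

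The only real obstacle is finding the right pairs; the parity trick above resolves it. Beyond that, the one point needing care is that the ``exactly one'' dichotomy is valid for every pair used. This is guaranteed by the preliminary observation that $S$ contains no involutions. In particular $a-c$ is not an involution, since $a-c\in S$ under our assumption.
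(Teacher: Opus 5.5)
Your proof is correct, and it is a streamlined version of the paper's argument. Both start the same way: assume $a+c\notin S$ and deduce $a-c\in S$.

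The paper then splits into cases according to which of $a+b+c$ and $a+b-c$ lies in $S$.
\begin{itemize}
\item When $a+b-c\in S$, it uses exactly your three pairs $(a+b,c)$, $(b+c,a)$ and $(b,a-c)$.
\item When $a+b+c\in S$, it argues separately through the auxiliary element $a+2b+c$, using the pairs $(a+b+c,b)$ and $(a+b,b+c)$.
\end{itemize}

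Your parity observation makes the second case unnecessary. If $a+b+c\in S$, then both $a+b-c$ and $b+c-a$ lie outside $S$. That contradicts the pair $(b,a-c)$, just as both lying inside $S$ does in the other case. So the same three pairs handle both cases, with no detour through $a+2b+c$.

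You also make explicit something the paper leaves implicit in this proof. Since $0\notin S$, we have $2s\neq 0$ for every $s\in S$, so each pair gives two distinct elements, exactly one of which is in $S$.

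The facts $2n\in S$, $3n\notin S$ and $S=-S$ that you derive are correct but not used.
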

\begin{proof}Suppose too that $a+c\not\in S$. Then $a-c\in S$. Also either $a+b+c$ or $a+b-c\in S$. 

But if $a+b+c\in S$ then $a+2b+c=(a+b+c)+b\in S$, as $(a+b+c)-b=a+c\not\in S$. 
But also $a+2b+c=(a+b)+(b+c)\not\in S$ as $(a+b)-(b+c)=a-c\in S$. A contradiction.

Also if $a+b-c\in S$, then  $(a+b)+c\not\in S$. Then $b-a+c\in S$ as it equals $(b+c)-a$, and $(b+c)+a=a+b+c\not\in S$. 
On the other hand, $b-a+c=b-(a-c)\not\in S$ as $b+(a-c)\in S$. Another contradiction. So $a+c\in S$.
\end{proof}

\begin{proposition}\label{P-props}In an abelian group $A$, let $S$ be a one-subset not containing $0$. Then $S$ has the following properties:
\begin{itemize}
\item[$(i)$] If $s\in S$ then $2s\in S$.
\item [$(ii)$] If $s\in S$ then $3s\not\in S$.
\item [$(iii)$] $s\in S$ if and only if $-s\in S$.
\item [$(iv)$] For $s$ and $t$ in $S$ write $s\sim t$ if $s+t\in S$. Then $\sim$ defines an equivalence relation on $S$.
\item[$(v)$] $S$ has exactly two equivalence classes $S_+$ and $S_-$ under $\sim$. For every $s\in S$, the elements $s$ and $-s$ lie in different classes. Thus for every $t\in S$ either $t\sim s$ or $t\sim -s$.
\item[$(vi)$] If $a,b$ and $c$ belong to the same equivalence class, then $a+b+2c$ lies in $S$ and belongs to the same equivalence class.
\item[$(vii)$] If $a$ and $b$  belong to the same equivalence class then $a+b\sim -a$.
\item[$(ix)$] If $a,b$ and $c$ belong to the same equivalence class, then $a+b+c\not\in S$.
\item[$(x)$] If $a,b,c$ and $d$ belong to the same equivalence class,
then $a+b+c+d$ also belongs to this equivalence class, and so certainly belongs to $S$. However  $a+b-c-d\not\in S$.
\end{itemize}
\end{proposition}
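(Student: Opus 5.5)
All nine properties follow from the definition of a one-subset, Lemma~\ref{L-trans}, and bookkeeping with the two classes. The first step is to note that no $s\in S$ is an involution. If $2s=0$, then $s+s=s-s=0$ would have to lie in $S$, which contradicts $0\notin S$. So for $s\in S$ exactly one of $s\pm s$ lies in $S$. Since $s-s=0\notin S$, we get $2s\in S$, which is $(i)$. Since $2s\in S$ and $2s-s=s\in S$, we get $2s+s=3s\notin S$, which is $(ii)$. Since $s,2s\in S$ and $s+2s\notin S$, we get $s-2s=-s\in S$, which is $(iii)$.

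For $(iv)$: reflexivity is $(i)$, symmetry holds because $A$ is abelian, and transitivity is exactly Lemma~\ref{L-trans}. Indeed, $a\sim b$ and $b\sim c$ mean $a+b,b+c\in S$, and the lemma gives $a+c\in S$. This lemma is the only genuinely nontrivial input; since it is already proved, I expect the rest to be routine.

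For $(v)$: we have $s+(-s)=0\notin S$, so $s\not\sim -s$, and by $(iii)$ both $s$ and $-s$ lie in $S$. For any $t\in S$, since $s$ is not an involution, exactly one of $t+s$ and $t-s=t+(-s)$ lies in $S$. Thus $t\sim s$ or $t\sim -s$, and not both. Hence there are exactly two classes, $S_+\ni s$ and $S_-\ni -s$. Moreover negation interchanges the classes: if $t\sim s$ then $-t\not\sim t$, so $-t\sim -s$. The remaining items all rest on one principle: for $u,v\in S$, $u+v\in S$ if and only if $u,v$ lie in the same class.

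For $(vii)$: if $a\sim b$ then $a+b\in S$ and $(a+b)+(-a)=b\in S$, so $a+b\sim -a$. In other words, the sum of two elements of one class lies in the other class.

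For $(vi)$: suppose $a,b,c\in S_+$. By $(vii)$, $a+b\in S_-$, and likewise $2c=c+c\in S_-$. Being in the same class, $a+b$ and $2c$ have sum $a+b+2c\in S$. Applying $(vii)$ again inside $S_-$, this sum lies in $S_+$.

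For $(ix)$: $a+b\in S_-$ and $c\in S_+$ lie in different classes, so $(a+b)+c\notin S$.

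For $(x)$: $a+b$ and $c+d$ both lie in $S_-$, so their sum lies in $S_+$ by $(vii)$. Also $-(c+d)\in S_+$, because negation swaps the classes. It is therefore in a different class from $a+b\in S_-$, so $a+b-c-d\notin S$.
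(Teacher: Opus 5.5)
Your proof is correct. Items (i)--(v) and (vii) follow the paper's argument exactly, with Lemma~\ref{L-trans} supplying transitivity. Your preliminary remark that no element of $S$ is an involution just makes explicit why $g+h\ne g-h$ in each application of the one-subset condition.

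Where you differ is in (vi), (ix) and (x). The paper argues directly from the one-subset condition:
\begin{itemize}
\item in (vi), $(b+c)-(a+c)=b-a\notin S$ forces $a+b+2c\in S$;
\item (ix) is derived from (vi) by contradiction, using $(a+b+c)-c=a+b$;
\item in (x), the difference $(a+b)-(c+d)$ is excluded because the sum $(a+b)+(c+d)$ is in $S$.
\end{itemize}
You instead derive all three uniformly from (vii). You use two facts: $u+v\in S$ exactly when $u,v$ lie in the same class, and negation swaps the two classes.

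Your route has a concrete advantage. It also proves the second half of (vi), that $a+b+2c$ lies in the same class as $a,b,c$. The paper's proof of (vi) asserts this but only actually shows $a+b+2c\in S$. The paper's route for (x) is marginally shorter, since it needs only that $(a+b)+(c+d)\in S$ rather than the class-swapping property of negation.
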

\begin{proof}
 Let $s,t,a,b,c,d\in S$.
 \begin{itemize}
\item[$(i)$] 
We have $s-s=0\not\in S$, so $s+s=2s\in S$.
\item [$(ii)$] 
We have $2s-s=s\in S$, so $2s+s=3s\not\in S$.
\item [$(iii)$] 
We have $s+2s=3s\not\in S$, so $s-2s=-s\in S$.
\item [$(iv)$] 
For $a,b,c\in S$ we have $a\sim a$ by $(i)$,  and if $a\sim b$ then $a+b=b+a\in S$, so $b\sim a$. Finally, the equivalence relation $\sim$ is transitive on $S$ by Lemma \ref{L-trans}.
\item[$(v)$] 
Because $s+(-s)=0\not\in S$, $s$ and $-s$ are not equivalent under $\sim$. Furthermore, either $t+s\in S$ or $t-s=t+(-s)\in S$, so $t\sim s$ or $t\sim-s$, so that the only two equivalence classes are those containing $s$ and $-s$.
\item[$(vi)$] 
Now $(b+c)-(a+c)=b-a\not\in S$, so that $(b+c)+(a+c)=a+b+2c\in S$.
\item[$(vii)$] 
This follows because $(a+b)+(-a)=b\in S$.
\item[$(ix)$] 
If $a+b+c\in S$ then from $(vi)$ we have $a+b+2c=(a+b+c)+c\in S$, so that $(a+b+c)-c=a+b\not\in S$, a contradiction.
\item[$(x)$]
From $(vii)$, $a+b$ and $c+d$ belong to the same equivalence class (namely, the one not containing $a,b,c,d$), so, again by $(vii)$, their sum $(a+b)+(c+d)$ belongs to the equivalence class containing $a,b,c,d$. Thus their difference $(a+b)-(c+d)$ does not belong to $S$.
\end{itemize}
\end{proof}

\begin{theorem}\label{T-nozero} Let $G$ be an abelian group having a subgroup $H$ of index $3$, and let $a$ be an element of $G\setminus H$. 
Then the subset $\{a+H\}\cup\{-a+H\}$ of $G$ is a one-subset which does not contain $0$. Furthermore, this one-subset generates $G$.
 \end{theorem}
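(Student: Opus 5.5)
Since $H$ has index $3$, the quotient $G/H$ is a group of order $3$, hence cyclic, $G/H\cong \Z/3\Z$, and the class $a+H$ generates it because $a\notin H$. I will therefore identify $G/H$ with $\Z/3\Z$ via a homomorphism $\pi\colon G\to \Z/3\Z$ with kernel $H$ and $\pi(a)=1$. Then $\pi(-a)=2$, and the set in question is
\[
S=(a+H)\cup(-a+H)=\{x\in G \>|\> \pi(x)\neq 0\}=G\setminus H .
\]
In particular $0\in H$ gives $0\notin S$. This reduces everything to the same arithmetic as in the case $I_g=g\Z\setminus 3g\Z$ of Theorem~\ref{ss1}.

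For the one-subset property, I take $x,y\in S$, so $u=\pi(x)$ and $v=\pi(y)$ both lie in $\{1,2\}\subset \Z/3\Z$. Then $\pi(x+y)=u+v$ and $\pi(x-y)=u-v$. If $u=v$, then $u-v=0$ and $u+v=2u\neq 0$. If $u\neq v$, then $\{u,v\}=\{1,2\}$, so $u+v=0$ and $u-v=\pm1\neq 0$. Hence exactly one of $\pi(x+y)$ and $\pi(x-y)$ is nonzero, so exactly one of $x+y$ and $x-y$ lies in $S$.

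One point needs checking: the definition also allows the degenerate case $x+y=x-y\in S$. That case cannot arise here, because the two elements always have different images under $\pi$ and so are distinct. The condition $\#(\{x+y,x-y\}\cap S)=1$ therefore holds for all $x,y\in S$.

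For generation, let $\langle S\rangle$ be the subgroup generated by $S$; it contains $a$. For any $h\in H$, both $a+h$ and $a$ lie in $S$, so $h=(a+h)-a\in\langle S\rangle$, giving $H\subseteq\langle S\rangle$. Since $\langle S\rangle$ also contains $S=G\setminus H$, it equals $G$.

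There is no real obstacle in this argument. The only step needing care is to note that $G/H$ is cyclic of order $3$, which holds because every group of prime order is cyclic, and that $a$ maps to a generator. Everything else is the mod-$3$ computation above.
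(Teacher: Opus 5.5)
Your proof is correct and follows essentially the same route as the paper: the paper's split into ``same coset'' versus ``different cosets'' is exactly your $u=v$ versus $u\neq v$ computation in $G/H\cong\Z/3\Z$. You also spell out the generation claim via $h=(a+h)-a$, which the paper only says ``follows easily'', and you note that $x+y\neq x-y$, which the paper leaves implicit.
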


 \begin{proof}
Let $S$ be the union $\{a+H\}\cup\{-a+H\}$ of two cosets of $H$ in $G$, and let $s$ and $s'$, not necessarily distinct, lie in $S$. Note that $0\not\in S$ and $3a\in H$. If $s$ and $s'$ both lie in the same coset, their sum lies in the other coset, and hence in $S$, while their difference lies in $H$, so is not in $S$. On the other hand, if $s$ and $s'$ lie in different cosets, then their sum lies in $H$, so not in $S$, while both their differences $s-s'$ and $s'-s$ lie in $S$. 
Thus $S$ is a one-subset of $G$. The final sentence of the theorem  follows easily.
\end{proof}

 Immediately, we have the following more general result.
\begin{corollary}\label{C-nozero}  If $A$ is any abelian group, then, for all subgroup pairs $H\subset G$ of $A$  with $[G:H]=3$ and any $a\in G\setminus H$,  the subsets $\{a+H\}\cup\{-a+H\}$ of $G$ are one-subsets of $A$ that do not contain $0$.
\end{corollary}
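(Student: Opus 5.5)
The plan is to reduce Corollary~\ref{C-nozero} to Theorem~\ref{T-nozero}, applied with the subgroup $G$ of $A$ playing the role of the ambient group. The point is that the one-subset condition is intrinsic. For $s,t\in S$, whether $s+t$ or $s-t$ lies in $S$ depends only on the group operation restricted to any subgroup containing $S$. So being a one-subset of $G$ and being a one-subset of $A$ are the same thing whenever $S\subseteq G$.

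The steps, in order:

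\textbf{Step 1.} Fix subgroups $H\subset G\subseteq A$ with $[G:H]=3$ and $a\in G\setminus H$. Since $G$ is abelian, being a subgroup of the abelian group $A$, Theorem~\ref{T-nozero} applies to the pair $(G,H)$. It shows that $S=(a+H)\cup(-a+H)$ is a one-subset of $G$ and that $0\notin S$.

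\textbf{Step 2.} Take $s,t\in S$. Then $s+t$ and $s-t$ lie in $G$ and are computed the same way in $G$ as in $A$. Hence the set $\{s+t,s-t\}\cap S$ is literally the same set whether we regard $S\subseteq G$ or $S\subseteq A$, so it still has exactly one element. This gives $\#(\{s+t,s-t\}\cap S)=1$ in $A$, which means $S$ is a one-subset of $A$.

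\textbf{Step 3.} The condition $0\notin S$ does not depend on the ambient group, so it carries over unchanged.

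I do not expect any genuine obstacle. The only point that needs stating is the observation in Step~2 that the defining condition of a one-subset refers only to sums and differences of elements of $S$, so it is inherited from any subgroup containing $S$. If one wanted to be self-contained, one could instead repeat the coset argument from the proof of Theorem~\ref{T-nozero} inside $A$, but that is unnecessary.
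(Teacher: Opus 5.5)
Your proof is correct and matches the paper's approach. The paper derives Corollary~\ref{C-nozero} immediately from Theorem~\ref{T-nozero} applied to the subgroup $G$. It relies on the same observation you make, stated explicitly in Section~\ref{S-ex}, that a one-subset of a subgroup remains a one-subset of any abelian group containing it.
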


We note that Theorem \ref{ss1}, applied to $A=\mathbb Z$, is stronger than this corollary, since it tells us that {\it all} one-subsets of $\mathbb Z$ not containing $0$ are produced in this way. Now, we prove in Theorem \ref{T-Abmain} that this also holds more generally.

\begin{theorem}\label{T-Abmain}Let $A$ be an abelian group containing a one-subset $S$ that does not contain $0$. Then inside $A$ the set $S$ generates a subgroup $G$ that itself contains a subgroup $H$ of index $3$. On partitioning $G$ into its 3 cosets of $H$, namely $H$, $a+H$ and $-a+H$, for some $a\in G\setminus H$, we have that $S=(a+H)\cup(-a+H)$. Furthermore, the subgroup $H$ is given by
\[
H=S_{+}-S_{+}=S_{-}-S_{-}.
\]
\end{theorem}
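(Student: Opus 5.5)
Fix $a\in S$. By Proposition~\ref{P-props}(v) we may take $S_+$ to be the class of $a$ and $S_-$ the class of $-a$; by (iii) and (v), $S_-=-S_+$. Define $H:=S_+-S_+$. The plan is to show three things in turn: that $H$ is a subgroup; that $S_+=a+H$ and $S_-=-a+H$; and that $H,\,a+H,\,-a+H$ are three distinct cosets exhausting the group $G$ generated by $S$. Once this is done, $S_--S_-=(-S_+)-(-S_+)=S_+-S_+$, so both formulas for $H$ hold, and $[G:H]=3$.

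\emph{First, $S_+\subseteq a+H$ and the sets $a+H$, $-a+H$, $H$ are pairwise disjoint.} For $s\in S_+$ we have $s=a+(s-a)$ with $s-a\in H$, so $S_+\subseteq a+H$. Disjointness comes from Proposition~\ref{P-props}(x). Suppose $a+b-c=-a+d-e$ with $b,c,d,e\in S_+$. Then $2a+b+e=c+d$. The left side lies in $S_+$ by (vi), and (x) says $(a+a+b+e)-(\cdots)\notin S$, so we can rewrite the identity as a four-term combination and obtain a contradiction. Concretely, I would use the identity $(a+b)+(a+e)=c+d$. By (vii), both sides are sums of two elements of $S_-$ (namely $a+b$ and $a+e$, or $c+d$ viewed via $-c-d$), and I would play these off with (vii) and (ix) to contradict the class membership. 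Similarly, $0\notin a+H$ because $a+b-c=0$ gives $a+b=c\in S_+$, contradicting (vii), which places $a+b$ in $S_-$.

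\emph{The main obstacle is closure: showing $H$ is a subgroup and $a+H\subseteq S_+$.} I would prove that for $b,c\in S_+$ the element $a+b-c$ lies in $S_+$. Consider $(a+b)$ and $c$. Here $a+b\in S_-$, so $(a+b)+c\in S$ would contradict (ix); hence $(a+b)-c\in S$. Its class is then pinned down by adding $c$ back, using the relation $\sim$ (the sum $(a+b-c)+c=a+b\in S$ shows $a+b-c\sim c$, so $a+b-c\in S_+$). This gives $a+H=S_+$, and then $-a+H=S_-$ by negation. Closure of $H$ then follows, since $h,h'\in H$ gives $a+h\in S_+$, and hence $(a+h)+h'\in S_+$ by the same argument applied with $a+h$ in place of $a$. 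Since $H=-H$ and $0\in H$, $H$ is a subgroup.

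\emph{Finally, $G=H\cup(a+H)\cup(-a+H)$.} This union contains $S$. It is closed under negation, and it is closed under addition: $S_++S_+\subseteq S_-\subseteq -a+H$ by (vii), $S_++S_-\subseteq H$ since $s-t\in H$, and adding elements of $H$ preserves cosets. Hence the union is a subgroup, and therefore equals $G$. Thus $[G:H]=3$, and $S=(a+H)\cup(-a+H)$.
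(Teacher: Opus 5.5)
Your proof is correct, but it is organised differently from the paper's.

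The paper first shows $G=S\cup(S_+-S_+)$. It writes an arbitrary element of $G$ as a sum of elements of $S_+$ and $S_-$, and uses Proposition~\ref{P-props}(vii) to collapse pairs of summands from the same class until at most one from each class remains. It then sets $H=G\setminus S=S_+-S_+$. Closure follows from $(b-c)+(b'-c')=(-(c+c'))-(-(b+b'))$ with $-(c+c'),-(b+b')\in S_+$, and the index is read off from $3a\in H$.

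You instead start from $H:=S_+-S_+$ and prove a ternary closure property: $a+b-c\in S_+$ for all $a,b,c\in S_+$. The one-subset condition on the pair $a+b,\,c$ together with (ix) gives $a+b-c\in S$, and $(a+b-c)+c=a+b\in S$ fixes its class. That single lemma yields both inclusions in $S_+=a+H$ and the closure of $H$ under addition at once. You then identify $G$ by checking that $H\cup(a+H)\cup(-a+H)$ is closed under addition and negation, rather than by reducing words in the generators. The paper's reduction shows more concretely how elements of $G$ arise from $S$. Your route is more explicit about the inclusion $a+H\subseteq S_+$, which the paper's ``Hence $S_+=a+H$'' leaves implicit.

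One presentational point: your disjointness paragraph is muddled. For instance, $c+d$ is not a sum of two elements of $S_-$, and ``play these off with (vii) and (ix)'' is not an argument. The contradiction there is immediate, since $2a+b+e\in S_+$ by (vi) while $c+d\in S_-$ by (vii). The paragraph is in any case redundant. Once $H$ is a subgroup with $S_+=a+H$ and $S_-=-a+H$, the three cosets are distinct simply because $S_+\cap S_-=\emptyset$ and $0\in H\setminus S$.
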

\begin{proof} By Proposition \ref{P-props}$(iii)$ we know that if $s\in S$ then $-s\in S$. 
Thus every element $g$ of $G$ can be written as a sum of several elements of $S$. 
Since the sets $S_{+}$ and $S_{-}$ are disjoint, we can write $g$ as the sum of several summands from $S_{+}$
and several summands from $S_{-}$. 
From Proposition \ref{P-props}$(vii)$ it follows that any two 
summands from
$S_{+}$ (or resp. from $S_{-}$) can be replaced by a single summand from $S_{-}$ (resp. $S_{+}$). 
In this way, we can reduce the number of summands in each sum to $0$ or $1$. Hence either $g=b$ for some $b\in S_+$ or $g=-c\in S_-$ (which means that $c\in S_+$) or $g=b-c$ for some $b$ and $c$ in $S_+$. Thus 
$g$ is either in $S$ or in the difference set $S_{+}-S_{+}$. 
Put $H=G\setminus S=S_{+}-S_{+}$. By the identity
$b-c=-c-(-b)$
it is clear that
$S_{+}-S_{+}=S_{-}-S_{-}$.

To show that $H$ is a subgroup of $G$, pick any $b\in S_+$. Then $0=b-b\in H$. Also, if $h\in H$ is given for some $b,c$ in $S_+$ by $h=b-c$, then $-h=c-b\in H$. Finally, if $h=b-c$ and $h'=b'-c'$ for $b,c,b',c'\in S_+$ then
$h+h'=b+b'-(c+c')= (-(c+c'))-(-(b+b'))$, where $c+c'$ and $b+b'$ lie in $S_-$, so by Proposition \ref{P-props}$(v),(vii)$ we see that $-(c+c')$ and $-(b+b')$ lie in $S_+$. Hence $h+h'\in H$, so that $H$ is a subgroup of $G$.

Now pick any $a\in S_+$. Then for any $b\in S_+$ we have $b=a+(b-a)\in a+H$, while for any $-c\in S_-$ we have $-c =-a+(a-c)\in -a+H$ because $c \in S_{+}$. Hence $S_+=a+H$ and $S_-=-a+H$. From Proposition~\ref{P-props}$(ii)$ we deduce $3a\in H$, so that $[G:H]=3$.
\end{proof}

\begin{corollary}\label{C-3}  For any $a\in G\setminus S$, subgroup $H$ of $G$ as in the theorem, and every integer $k\ge 0$
\begin{itemize}
 \item[$(i)$] the sum of $3k$ elements of $a+H$ lies in $H$;
 \item[$(ii)$] the sum of $3k+1$ elements of $a+H$ lies in $a+H$;
  \item[$(iii)$] the sum of $3k+2$ elements of $a+H$ lies in $-a+H$.
\end{itemize}
\end{corollary}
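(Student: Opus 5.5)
Throughout, $a+H$ is one of the three cosets of the index-$3$ subgroup $H$ in $G$ supplied by Theorem~\ref{T-Abmain}; the hypothesis ``$a\in G\setminus S$'' should presumably read $a\in G\setminus H$ (or $a\in S$). The natural tool is the quotient group $G/H$, which has order $3$ and is therefore cyclic, $G/H\cong \Z/3\Z$. The coset $a+H$ is a generator of this quotient, and $-a+H=2a+H$ because $3a\in H$.

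First, let $x_1,\dots,x_n\in a+H$ and write $x_j=a+h_j$ with $h_j\in H$. Then
\[
x_1+\cdots+x_n=na+(h_1+\cdots+h_n)\in na+H ,
\]
since $H$ is a subgroup. So everything reduces to identifying the coset $na+H$.

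Next, write $n=3k+r$ with $r\in\{0,1,2\}$. Since $3a\in H$, we have $na+H=ra+H$. The three cases follow directly. For $r=0$ the sum lies in $H$, giving $(i)$; the case $k=0$ is the empty sum $0\in H$. For $r=1$ it lies in $a+H$, giving $(ii)$. For $r=2$ it lies in $2a+H=-a+H$, giving $(iii)$.

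There is no real obstacle here. The only points needing care are reading the hypothesis on $a$ correctly and recalling from the end of the proof of Theorem~\ref{T-Abmain} that $3a\in H$, which comes from Proposition~\ref{P-props}$(ii)$ and the index computation. Alternatively one can argue inside $S$: by Proposition~\ref{P-props}$(vii)$, two summands from $S_+=a+H$ combine into one summand of $S_-$, and Theorem~\ref{T-Abmain} gives $S_+-S_+=H$. The coset computation above is simpler and covers all $k$ at once.
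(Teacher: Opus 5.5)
Your argument is correct, but it takes a different route from the paper. You work directly in the quotient $G/H$ of order $3$. Writing each summand as $a+h_j$ reduces everything to the coset $na+H=ra+H$ with $r\equiv n \pmod 3$, which settles all three parts and all $k$ at once. It also holds for every $a\in G$, so the slip in the hypothesis ($G\setminus S$ for $G\setminus H$) does not affect your proof.

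The paper instead stays inside the one-subset and inducts on $k$. The base cases come from Proposition~\ref{P-props}: by $(ix)$, three summands from one class give an element outside $S$, hence in $H=G\setminus S$; by $(vii)$, two summands land in the other class. The inductive step writes $3(k+1)=(3k-1)+4$ and uses Proposition~\ref{P-props}$(x)$ to collapse four summands from one class into a single summand of that class.

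The paper's version shows how the combinatorial rules of Proposition~\ref{P-props} already encode the arithmetic of $\Z/3\Z$. Once Theorem~\ref{T-Abmain} has established that $H$ is a subgroup of index $3$, however, your coset computation is shorter and needs no induction. It also makes clear that the corollary is a purely group-theoretic consequence of $[G:H]=3$; in particular $3a\in H$ for every $a\in G$ is then automatic.
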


\begin{proof}
 Clearly, $(i)$ is true for $k=0$ (empty sum) and for $k=1$ by Proposition \ref{P-props}$(ix)$.
 Assume $(i)$ is true for $k$, where $k\ge 1$. Then the sum of $3(k+1)=(3k-1)+4$ elements of $a+H$ is, by Proposition \ref{P-props}$(x)$, a sum of $(3k-1)+1=3k$ elements of $a+H$, and so is in $a+H$ by the induction hypothesis.
 
 Next, $(ii)$ is trivially true for $k=0$. Then proceed as in $(i)$.
Finally, $(iii)$ is true for $k=0$ by Proposition \ref{P-props}$(vii)$. Then proceed as in $(i)$.
\end{proof}

\section{Examples}\label{S-ex}
In this section we describe how to find all one-subsets of particular abelian groups $G$.

\subsection{Preliminary lemma}

\begin{lemma}\label{L-indexp}
Let $p$ be a prime number and $A$ be a finite abelian $p$-group. Then $A$ has $(p^k-1)/(p-1)$ subgroups of index $p$, where $A$ is a direct sum of $k$  $p$-power cyclic groups $C_j$ with generators $g_j\,(j=1,\dots, k)$. These subgroups can be described as follows:  let $f:A\to \Z/p\Z$ be a homomorphism that maps $g_j \mapsto r_j\,(j=1,\dots, k)$. Further specify that not all $r_j$ are $0$, and the least $j$ for which $r_j\not= 0$ has $r_j=1$. Then the subgroups of $A$ of index $p$ are the kernels of such maps, which are distinct.
\end{lemma}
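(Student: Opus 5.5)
Any subgroup $K$ of index $p$ in $A$ is the kernel of a surjective homomorphism $A\to A/K\cong \Z/p\Z$. Conversely, the kernel of any nonzero homomorphism $A\to\Z/p\Z$ has index $p$. So the plan is to count nonzero homomorphisms $f:A\to\Z/p\Z$ up to the equivalence $f\sim\lambda f$ with $\lambda\in(\Z/p\Z)^\times$. Two nonzero homomorphisms have the same kernel if and only if they differ by such a scalar. Indeed, if $\ker f=\ker f'=K$, both factor through $A/K\cong\Z/p\Z$. Any two isomorphisms $A/K\to\Z/p\Z$ differ by an automorphism of $\Z/p\Z$, and every such automorphism is multiplication by a unit.

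Next I will describe $\mathrm{Hom}(A,\Z/p\Z)$ concretely. Write $A=C_1\oplus\cdots\oplus C_k$ with $C_j$ cyclic of order $p^{e_j}$, $e_j\ge1$, generated by $g_j$. A homomorphism out of a direct sum is determined by its values on the summands. A homomorphism $C_j\to\Z/p\Z$ is determined by $r_j=f(g_j)$, subject only to $p^{e_j}r_j=0$, which holds automatically since $e_j\ge 1$. Hence every tuple $(r_1,\dots,r_k)\in(\Z/p\Z)^k$ arises from exactly one $f$, so $\mathrm{Hom}(A,\Z/p\Z)\cong(\Z/p\Z)^k$. Under this identification, scaling $f$ by $\lambda$ scales the tuple by $\lambda$.

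It remains to pick one representative from each scalar class of nonzero tuples. For a nonzero tuple, let $j_0$ be the least index with $r_{j_0}\ne0$, and multiply the tuple by $r_{j_0}^{-1}$; this normalizes it to have $r_{j_0}=1$. The normalized representative is unique, because the only unit fixing the entry $1$ in position $j_0$ is $\lambda=1$. So the normalized tuples described in the statement give distinct kernels, and every index-$p$ subgroup arises this way. For the count, note that each scalar class of nonzero tuples has exactly $p-1$ elements, so there are $(p^k-1)/(p-1)$ classes. Equivalently, for each $j_0$ there are $p^{k-j_0}$ normalized tuples, and summing over $j_0$ gives $\sum_{i=0}^{k-1}p^i$.

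I expect no real obstacle here. The one point needing care is the first step: showing that equal kernels force the maps to be scalar multiples, which is what makes the normalized kernels distinct. The factorization through $A/K$ handles it.
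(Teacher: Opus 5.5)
Your proof is correct and follows the same route as the paper: you identify index-$p$ subgroups with kernels of nonzero homomorphisms $A\to\Z/p\Z$, parametrize these by $(r_1,\dots,r_k)\in(\Z/p\Z)^k$, and take the normalized representative of each scalar class. Beyond the paper, you justify the step it calls clear (equal kernels force $f'=\lambda f$, by factoring through $A/K$) and you carry out the count $(p^k-1)/(p-1)$ explicitly, which the paper leaves implicit.
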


\begin{proof} Since all subgroups of $A$ are normal, each one of index $p$ is the kernel of some surjective homomorphism $f:A\to \Z/p\Z$. 
 Let $f$ be the map described in the lemma, so that for integers $m_j$ we have $\sum_{j=1}^k m_jg_j \stackrel{f}{\mapsto}  \sum_{j=1}^k m_jr_j$. This is surjective unless  all $r_j=0$. So assume $f\ne 0$. Then 
\[
\ker f= \left\{\sum_{j=1}^k m_jg_j: \sum_{j=1}^k m_jr_j=0\right\},
\]
so that $\ker f$ is determined by $\underline{r}:=
(r_1,\dots,r_k)$. Furthermore two vectors $\underline{r}$ and $\underline{r'}\in (\Z/p\Z)^k$ clearly determine the same subgroup of $A$ if and only if $\underline{r}=\lambda\underline{r}$
for some $\lambda\in (\Z/p\Z)^\times$. Thus we have a bijection between subgroups of index $p$ in $A$ and vectors $\underline{r}\in (\Z/p\Z)^k$ whose least $j$ for which $r_j\not= 0$ has $r_j=1$.
\end{proof}

\begin{corollary}\label{C-pgp} Given a finite abelian group of order divisible by a prime $p$, written as $A\oplus A'$, where $A$ is a $p$-group and $A'$ has order coprime to $p$, then the subgroups of $A\oplus A'$ of index $p$ are of the form $H\oplus A'$, where $H$ is a subgroup of $A$ of index $p$, as given by Lemma \ref{L-indexp}.
\end{corollary}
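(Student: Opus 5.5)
We must show that the subgroups of $A\oplus A'$ of index $p$ are exactly the subgroups $H\oplus A'$, where $H$ ranges over the index-$p$ subgroups of $A$. The plan is to treat each index-$p$ subgroup $K$ as the kernel of a surjective homomorphism $\phi\colon A\oplus A'\to \Z/p\Z$, as in the proof of Lemma \ref{L-indexp}, and then restrict $\phi$ to the two summands.

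First, $K$ is normal because the group is abelian, so the quotient $(A\oplus A')/K$ has order $p$ and is therefore cyclic of order $p$. Composing the projection with an isomorphism of this quotient onto $\Z/p\Z$ gives a surjective homomorphism $\phi$ with $\ker\phi=K$.

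Next, consider the restriction $\phi|_{A'}$. Every element $x\in A'$ has order dividing $|A'|$, which is coprime to $p$. Hence $\phi(x)$ is an element of $\Z/p\Z$ whose order divides both $p$ and $|A'|$, so $\phi(x)=0$. Thus $A'\subseteq K$, and $\phi(a,a')=\phi(a,0)$ for all $a\in A$ and $a'\in A'$.

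Now put $f=\phi|_A$. This map is surjective, since $\phi$ is surjective and depends only on the $A$-component. Setting $H=\ker f$, we get $[A:H]=p$ and $K=\{(a,a'): a\in H\}=H\oplus A'$. Because $A$ is a finite abelian $p$-group, Lemma \ref{L-indexp} applies to $H$, so $H$ is one of the kernels described there.

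Conversely, suppose $H$ has index $p$ in $A$. Then
\[
(A\oplus A')/(H\oplus A')\cong A/H,
\]
which has order $p$, so $H\oplus A'$ has index $p$ in $A\oplus A'$. Distinct choices of $H$ give distinct subgroups $H\oplus A'$, because $H$ is recovered as $(H\oplus A')\cap A$.

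The only step needing a real argument is showing that $\phi$ kills $A'$, and this follows directly from the coprimality of $|A'|$ and $p$. Everything else is bookkeeping.
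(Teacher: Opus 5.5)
Your proof is correct and follows the same route as the paper. The paper's entire argument is the observation that any homomorphism $A\oplus A'\to\Z/p\Z$ must kill $A'$. You supply the bookkeeping the paper leaves implicit: realising an index-$p$ subgroup as such a kernel, restricting to $A$, and checking the converse and distinctness.
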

This is clear from the fact that any homomorphism from $A\oplus A'$ to $\Z/p\Z$ must map $A'$ to $0$.
\subsection{Finite abelian groups}

 Let $G$ be a finite additive abelian group. Here we apply Theorem \ref{Inv} to first describe all its one-subsets that contain $0$. Let us write $G$ as a direct sum 
\[
G=E_1\oplus\cdots\oplus E_m\oplus A'
\]
of groups, where all $E_i:=\langle e_i \rangle$ have orders $2n_i$ that are powers of $2$, and $A'$ has odd order. Every involution subgroup $I$ of $G$ is a subgroup of
\[
F:=\langle f_1 \rangle\oplus\cdots\oplus\langle f_m \rangle,
\]
where $f_i=n_ie_i$. Let us abbreviate a typical element $\sum_i b_if_i$ of $F$ by $(b_1,\dots,b_m)$, where all $b_i\in \{0,1\}$. Every involution subgroup $I$ of $G$ will have a basis $\pmod 2$ of such vectors, which we can assume are in echelon form. This gives all possible involution subgroups of $G$ which, by Theorem \ref{Inv}(i), are one-subsets of $G$ that contain $0$. To find all other one-subsets of $G$ that contain $0$ we need to find, by Theorem \ref{Inv}(ii), all $g\in G$ with $2g\not\in I$.  To do this, we find (and eliminate) all $g\in G$ with $2g\in I$. 
Clearly the component of such a $2g$, and thus also the component of $g$
in $A'$, must be $0$. Also, the components of $2g$ in $E_1\oplus\cdots\oplus E_m$ must lie in $F$ and the corresponding binary vector must lie in the span of the binary vectors that form a basis $\pmod 2$ for $I$. Put
\[
2g=b_1'f_1+\cdots+b_m'f_m=b_1'n_1e_1+\cdots+b_m'=b_1'n_1e_1+\cdots+b_m'n_me_m
\]
and $g=n_1'e_1+\cdots+n_m'e_m$, say. If $b_i'=0$ then $2n_i\mid 2n_i'$, so $n_i'=0$ or $n_i$, while if $b_i'=1$ then $2\mid n_i$ and $(n_i/2)\mid n_i'$. Thus we can find all the  $g$  forbidden by Theorem \ref{Inv}(ii).

We now find the one-subsets of $G$ that do not contain $0$. 
Since a one-subset of $G$ is also a one-subset of any abelian group containing $G$, we can confine our attention to describing the one-subsets of this type that generate $G$. Thus any such one-subset of a proper subgroup of $G$ will also be a one-subset of $G$. From Theorem \ref{T-Abmain} we know that these one-subsets are described using the subgroups $H$ of $G$ of index $3$. Thus we need to find such subgroups $H$, which are the kernels of all possible surjective homomorphisms from $G$ to the cyclic group $C_3$ of order $3$. These are described in Lemma \ref{L-indexp} and Corollary \ref{C-pgp} for $p=3$.

\begin{corollary}\label{C-finite} Let $C_n=\{0,1,\dots,n-1\}$ be a finite additive cyclic group of order $n$, with $U_n$ denoting its subset of elements that are coprime to $3$. Then a complete list of the one-subsets of $C_n$ is the following:
\begin{itemize}
\item[$(i)$] $\{0\}$;
\item[$(ii)$] $\{0,g\}$ for some $g\in C_n$ with $g \ne 0$;
\item[$(iii)$] $\{0,g,\frac{n}2,g+\frac{n}2\}$, where $n \geq 6$ is even, $1\le g<\frac{n}2$ and $g\ne \frac{n}4$;
\item[$(iv)$] $gU_{n/g}$ for some $n$ divisible by $3$ and some $g$ dividing $\frac{n}{3}$.
\end{itemize}
\end{corollary}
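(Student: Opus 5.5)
The plan is to split according to whether $0\in S$, and to use Theorem~\ref{Inv} in the first case and Theorem~\ref{T-Abmain} in the second.

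\emph{One-subsets containing $0$.} A cyclic group $C_n$ has at most one involution, namely $\frac n2$ when $n$ is even. So its involution subgroups are $\{0\}$ and, for even $n$, $\{0,\frac n2\}$. By Theorem~\ref{Inv}(i) these give $\{0\}$, which is case $(i)$, and $\{0,\frac n2\}$, which is of the form in case $(ii)$.

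Next apply Theorem~\ref{Inv}(ii) with $I=\{0\}$. We need $2g\ne 0$. The resulting sets $\{0,g\}$ with $2g\neq0$, together with $\{0,\frac n2\}$ from before, make up all of case $(ii)$.

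Now take $I=\{0,\frac n2\}$ with $n$ even. We need $2g\notin\{0,\frac n2\}$, that is, $g\notin\{0,\frac n2,\frac n4,\frac{3n}4\}$, where the last two values occur only when $4\mid n$. Then $S=\{0,\frac n2,g,g+\frac n2\}$. Since $g$ and $g+\frac n2$ give the same set, we may normalise so that $1\le g<\frac n2$, and then the excluded values reduce to $g\ne\frac n4$. If such a $g$ exists, then $n\ge 6$: for $n=2$ there is no $g$ at all, and for $n=4$ the only choice $g=1$ equals $\frac n4$. This gives case $(iii)$.

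\emph{One-subsets not containing $0$.} By Theorem~\ref{T-Abmain}, $S$ generates a subgroup $G$ of $C_n$, and $S=(a+H)\cup(-a+H)$, where $H$ has index $3$ in $G$. Every subgroup of $C_n$ is cyclic, so $G=\langle g\rangle$ with $g\mid n$ and $|G|=n/g$.

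For $H$ to exist we need $3\mid n/g$. In that case $H$ is unique, namely $H=3g\,C_n=\langle 3g\rangle$, and its two nontrivial cosets in $G$ together are $\{kg: 3\nmid k\}$, which is exactly $gU_{n/g}$. So $3\mid n$ and $g\mid \frac n3$, giving case $(iv)$.

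\emph{Converse.} Corollary~\ref{C-nozero} (equivalently, Theorem~\ref{T-nozero} applied to $G=\langle g\rangle$) shows that every set in case $(iv)$ is a one-subset. The converse half of Theorem~\ref{Inv} covers cases $(i)$–$(iii)$.

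The main obstacle I expect is bookkeeping in case $(iii)$: making sure the normalisation $1\le g<\frac n2$ hits each set exactly once, and checking the small values of $n$ so that the bound $n\ge6$ is correct. Everything else follows directly from the earlier theorems together with the cyclicity of subgroups of $C_n$.
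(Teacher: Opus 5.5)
Your proposal is correct and follows the same route as the paper: cases $(i)$--$(iii)$ come from Theorem~\ref{Inv}, using that the only involution subgroups of $C_n$ are $\{0\}$ and, for even $n$, $\{0,\frac n2\}$. Case $(iv)$ comes from Theorem~\ref{T-Abmain} together with the fact that the subgroups of $C_n$ having a subgroup of index $3$ are exactly the $\langle g\rangle$ with $g\mid\frac n3$; you simply supply the bookkeeping the paper leaves implicit (the normalisation $1\le g<\frac n2$, the exclusion $g\ne\frac n4$, and the check that $n\ge 6$).
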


The proof of parts $(i)-(iii)$ of the corollary come straight from Theorem \ref{Inv}. Part $(iv)$ comes from Theorem \ref{T-Abmain}, using the fact that the subgroups of $C_n$ containing a (sub)subgroup of index $3$ are the subgroups $C_{n/g}$, where $g\in C_n$ divides $n/3$.

Note that possibilities $(i), (ii), (iv)$
are similar to those described in Theorem~\ref{ss1}, while
$(iii)$ is different and reflects the fact that $C_n$ is finite. 

\subsection{The lattice $\Z^n$}\label{SS-Zn}
Since $\Z^n$ contains no nontrivial involutions, we know from Theorem \ref{Inv} that its only one-subsets containing $\underline{0}$ are $\{\underline{0}\}$ and $\{\underline{0},\underline{v}\}$ for any $\underline{v}\ne\underline{0}\in \Z^n$.

To obtain the one-subsets $S$ of $\Z^n$ not containing $\underline{0}$, we first assume that 
the $\Z$-span of $S$ is the whole of $\Z^n$. We apply Theorem \ref{T-Abmain} with $G=A=\Z^n$.
 Thus $S$ is of the form $(\underline{a}+H)\cup(-\underline{a}+H)$ for some sublattice $H$ of index $3$ in $\Z^n$, and $\underline{a}\in \Z^n\setminus H$. We now choose an $n\times n$ integer matrix of determinant $3$, $A_H$ say, whose columns are a $\Z$-basis of $H$. Thus by putting $A_H$ in (column) Hermite normal form we can assume that 
 \begin{equation*}
 A_H=\left(
 \begin{matrix}
 1 & 0 &  0 & \cdots &  &  & \cdots &  &  0 \\
 0 & 1 & 0 & \cdots &   &  &  &  &  0 \\
 \vdots   &   &   \ddots  & \ddots  &  &  &  &  & \vdots\\
0 & 0 & \cdots & 1 & 0 &  0 &  & \cdots & 0  \\
a_1 & a_2 & \cdots & a_{j-1} & 3 & 0 & & \cdots & 0 \\
0 & 0 & 0 &  \cdots & 0 & 1 &  & \cdots & 0 \\
0 & 0 & 0 &  \cdots & 0 & \ddots & \ddots & \vdots &0 \\
 \vdots   &   & &  & &  & \ddots & \ddots & 0\\
0 & 0 & 0 &  \cdots & 0 & 0 & \cdots &  0 & 1 
 \end{matrix}
 \right)
 \end{equation*}
where $a_1,a_2,\dots,a_{j-i}\in \{0,1,2\}$. Each matrix $A_H$ corresponds to the one-subset 
\[
S=(\underline{a}+H)\cup(-\underline{a}+H),
\]
where we can take $\underline{a}$ to be the the vector with $1$ in its $j$-th place and $0$ elsewhere. 

As a check that $H$ has index $3$ in $\Z^n$, we note that the columns of the matrix $A_H'$, defined as $A_H$ emended by replacing its $j$-th column (which contains $3$) by $\underline{a}$,
span $\Z^n$. Thus $\Z^n$ is indeed the disjoint union of $H$, $\underline{a}+H$ and
$(-\underline{a}+H)$. Note too that $S$ spans $\Z^n$.

We now complete our search for one-subsets $S$ of $\Z^n$ not containing $\underline{0}$, where we now allow the $\Z$-span of $S$ to be any (additive) subgroup $G$ of $\Z^n$. So $G$ is a $k$-dimensional lattice dimension $k$, where $1\le k\le n$. Such a lattice is isomorphic to $\Z^k$,
so the method above, with $n$ replaced by $k$ gives the construction of all sublattices $H$ of index $3$ in $G$.  To take the very simple case $k=1$, the $\Z$-span of $S$ is the group of integer multiples of some $g\in\Z$. Its only subgroup of index $3$ is the group $H$ generated by $3g$, so that $S=(1+H)\cup(-1+H)= \{j\in \Z: 3\nmid j\}$, as in Theorem \ref{ss1}$(iii)$.

\subsection{The $3$-adic integers $\Z_3$}
Recall that for the field $\Q_3$ of $3$-adic numbers, armed with the $3$-adic valuation $|\,\,\,|_3$, its ring of integers $\Z_3$ is $\{x\in \Q_3: |x|_3\le 1\}$. In this ring, the set $\{x\in \Q_3: |x|_3 < 1\}$ is a maximal ideal. Denoting this ideal by $H_3$, and considering the additive structure only, we see that $S:=(1+H_3)\cup(-1+H_3)$ is a one-subset of $\Z_3$ that does not contain $0$. 

Further one-subsets $S_F$ of $\Z_3$ can be obtained using subfields $F$ of $\Q_3$, by intersecting $H_3$, and then defining $S_F:=(1+H_3\cap F)\cup(-1+H_3\cap F)$. This construction certainly does not exhaust the one-subsets of $\Z_3$ that do not contain $0$. For instance, the $\Z$-span of any set of $n$ elements of $\Z_3$ that are linearly independent over $\Z$ is of course isomorphic as an additive group to $\Z^n$, so additional one-subsets of $\Z_3$ can be constructed using the methods of
subsection \ref{SS-Zn} above.

\section{Nonabelian groups}

We now extend the notion of a one-subset to nonabelian groups, which we write multiplicatively. More precisely, given a group $G$ with group operation $*$, we are searching for subsets $S$ of $G$ with the property that for any elements $i,j\in S$ we have 
\begin{equation}
\label{byu}    
\#(\{i*j,i*j^{-1}\} \cap S)=1.
\end{equation}
We generalise from the abelian case to also call such a set $S$ a {\it one-subset} of $G$.

\begin{theorem}\label{T-Gnozero} Let $G$ be a group with identity element $e$  having a normal subgroup $H$ of index $3$, and let $a$ be an element of $G\setminus H$. 
Then the subset $\{aH\}\cup\{a^{-1}H\}$ of $G$ is a one-subset which does not contain $e$.
 \end{theorem}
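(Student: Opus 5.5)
The plan is to pass to the quotient and reduce everything to a computation in the cyclic group of order $3$, as in the proof of Theorem \ref{T-nozero}. Since $H$ is normal of index $3$, the quotient $G/H$ is a group of order $3$, hence cyclic. Let $\pi:G\to G/H$ be the canonical homomorphism. Because $a\notin H$, the coset $\pi(a)$ generates $G/H$. Then $a^{-1}H=\pi^{-1}(\pi(a)^{-1})=\pi^{-1}(\pi(a)^2)$, so $a^2H=a^{-1}H$, and the three cosets are $H$, $aH$ and $a^{-1}H$, which are pairwise distinct. Writing $S=aH\cup a^{-1}H$, we therefore have $S=G\setminus H=\pi^{-1}(\{x,x^2\})$, where $x=\pi(a)$. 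In particular $e\in H$, so $e\notin S$.

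To check the one-subset condition \eqref{byu}, take $i,j\in S$, possibly equal. Since $S$ is a union of cosets of $H$, membership of $i*j$ and $i*j^{-1}$ in $S$ depends only on $\pi(i*j)=\pi(i)\pi(j)$ and $\pi(i*j^{-1})=\pi(i)\pi(j)^{-1}$. Now $\pi(i),\pi(j)\in\{x,x^{2}\}$, and there are two cases.

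\begin{itemize}
\item If $\pi(i)=\pi(j)$, then $\pi(i*j^{-1})=1$, so $i*j^{-1}\in H$ and hence $i*j^{-1}\notin S$. On the other hand $\pi(i*j)=\pi(i)^2\neq 1$, since $\pi(i)$ has order $3$, so $i*j\in S$.
\item If $\pi(i)\ne\pi(j)$, then $\pi(j)=\pi(i)^{-1}$. Hence $\pi(i*j)=1$, so $i*j\notin S$, while $\pi(i*j^{-1})=\pi(i)^2\neq 1$, so $i*j^{-1}\in S$.
\end{itemize}

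In either case exactly one of $i*j$ and $i*j^{-1}$ lies in $S$. It is also worth noting that $i*j$ and $i*j^{-1}$ are never equal here: equality would force $j^2=e$, but $\pi(j)$ has order $3$. So the count in \eqref{byu} is exactly $1$.

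There is no real obstacle. The only point needing care is the use of normality of $H$: it makes $\pi$ a homomorphism and makes $S$ a union of left cosets that are also right cosets, so membership depends only on the image in $G/H\cong C_3$. Normality is also what gives $a^{-1}H=a^2H$, so that $S$ is exactly the complement of $H$. Once this is noted, the verification is the same two-case check as in Theorem \ref{T-nozero}.
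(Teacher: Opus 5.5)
Your proof is correct and follows the paper's argument: the same split into the cases where $i,j$ lie in the same coset or in different cosets, with the same conclusion in each case. The only difference is presentational. You let the quotient map $\pi\colon G\to G/H\cong C_3$ do the bookkeeping, whereas the paper does it by hand, conjugating elements of $H$ by $a^{\pm\varepsilon}$ to move them past powers of $a$.
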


 \begin{proof}
Let $S$ be the union $\{aH\}\cup\{a^{-1}H\}$ of two cosets of $H$ in $G$, and let $s$ and $s'$, not necessarily distinct, lie in $S$. Note that $e\not\in S$ and $a*a*a\in H$. If $s$ and $s'$ both lie in the same coset, then $s=a^\eps * h$ and  $s=a^\eps * h'$ say, where $\eps=\pm 1$. Their product
is
\[
s*s'=a^\eps *h*a^\eps *h'= a^{2\eps}*(a^{-\eps}*h*a^\eps) *h'=a^{2\eps}*h''*h'=a^{-\eps}*h'''
\]
for some $h'',h'''\in H$, so that it lies in the other coset, and hence in $S$. 
(Here and below, $a^{2\eps}=a^{\eps}*a^{\eps}$.)
However,  
\[
s*(s')^{-1}=a^\eps * h*(h')^{-1}*a^{-\eps} \in H,
\]
 so is not in $S$. On the other hand, if $s$ and $s'$ lie in different cosets, say $s=a^{\eps}*h$ and $s'=a^{-\eps}*h'$, then 
 \[
s*s'=a^\eps *h*a^{-\eps} *h'
 \]
 is of the form $h''h'$, with $h''=a^{\eps}*h*a^{-\eps} \in H$, so it lies in $H$, while
 \[
s*(s')^{-1}=a^\eps *h*(h')^{-1}*a^{\eps}= a^{2\eps}*(a^{-\eps}*h*(h')^{-1}*a^{\eps}),
\]
 which lies in $a^{2\eps}H=a^{-\eps}H$.
 Therefore $S$ is a one-subset of $G$.
\end{proof}

 Immediately, we have the following more general result.
\begin{corollary}\label{C-Gnozero}  If $B$ is any  group then for all subgroup pairs $H\subset G$ of $B$  with $[G:H]=3$ and $a\in G\setminus H$,  the subsets $\{aH\}\cup\{a^{-1}H\}$ of $G$ are one-subsets of $B$ that do not contain $e$.
\end{corollary}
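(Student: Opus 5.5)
The plan is to deduce Corollary~\ref{C-Gnozero} from Theorem~\ref{T-Gnozero}, applied to the group $G$ itself, and then observe that membership in a one-subset is checked only through products of elements of $S$, all of which lie inside $G$.

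\emph{Step 1: $H$ is normal in $G$.} Theorem~\ref{T-Gnozero} needs $H$ to be normal in $G$, so this comes first. In the abelian setting (Corollary~\ref{C-nozero}) it is automatic; here I will argue from the index. Since $[G:H]=3$, the left action of $G$ on the three left cosets of $H$ gives a homomorphism $\varphi:G\to S_3$ with $\ker\varphi\subseteq H$.

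\emph{Step 2: the main obstacle.} Normality of an index-$3$ subgroup is not automatic in general: $S_3$ has non-normal subgroups of index $3$. I will handle the two cases as follows.
\begin{itemize}
\item If $\varphi(G)=A_3$, then $\ker\varphi$ has index $3$ and is contained in $H$, so $H=\ker\varphi$ is normal.
\item Otherwise $\varphi(G)=S_3$. In this case I expect the statement can fail, and would check it on $G=S_3$ with $H=\langle(12)\rangle$ and $a=(13)$. Computing there, $a^{-1}=a$, so $S=aH\cup a^{-1}H$ is the single coset $aH$. Taking $s=s'=a$, we get $s*s=e$ and $s*s^{-1}=e$, and neither lies in $S$, so $\#(\{s*s,\,s*s^{-1}\}\cap S)=0$.
\end{itemize}
So the corollary as literally stated needs the implicit hypothesis that $H$ is normal in $G$, the analogue of Theorem~\ref{T-Gnozero}. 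The proof will adopt that reading and remark on the counterexample.

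\emph{Step 3: apply Theorem~\ref{T-Gnozero}.} With $H\trianglelefteq G$ of index $3$ and $a\in G\setminus H$, Theorem~\ref{T-Gnozero} applied to $G$ shows that $S=aH\cup a^{-1}H$ is a one-subset of $G$ not containing $e$.

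\emph{Step 4: pass to $B$.} The defining condition \eqref{byu} concerns only the elements $i*j$ and $i*j^{-1}$ for $i,j\in S$, together with membership in $S$. These products are computed identically in $G$ and in $B$, because $G$ is a subgroup of $B$. Hence $S$ is also a one-subset of $B$, and $e\notin S$ holds regardless of the ambient group.
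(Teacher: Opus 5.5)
Your proposal is correct and follows the paper's route. The paper obtains the corollary immediately from Theorem~\ref{T-Gnozero} applied to $G$, together with the fact that the one-subset condition only involves products of elements of $S$ computed inside $G$; these are exactly your Steps~3 and~4.

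The paper tacitly uses that $H$ is normal in $G$, which is automatic in the abelian Corollary~\ref{C-nozero} but is omitted from this statement. Your example $G=S_3$, $H=\langle(12)\rangle$, $a=(13)$, with $a*a=a*a^{-1}=e\notin S$, is a valid counterexample to the literal statement. In fact normality is necessary in general: if $\varphi(G)=S_3$, then $aH$ contains some $g$ with $\varphi(g)$ a transposition, so $g*g$ and $g*g^{-1}$ both lie in $\ker\varphi\subseteq H$, which is outside $S$.
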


Concerning one-subsets not containing $e$ of a nonabelian group $G$, it is trivial to see that both $\{e\}$ and $\{e,g\}$ for every $g\in G$ are one-subsets, just as in the abelian case.
 However, we do not have a complete picture of all one-subsets $S$ in the nonabelian case,  for either of the cases $e\in S$ and $e\not\in S$.

\section{Groups not generated by a one-subset} \label{S-non-gen} Both in Corollary \ref{C-nozero} and in the examples of Section \ref{S-ex} we have seen that groups inherit one-subsets from their subgroups. Thus to find all one-subsets of a group we can confine our attention to its subgroups that are generated by one of their one-subsets. In particular, divisible groups have no subgroups of finite index, and so Theorem \ref{T-Abmain}
tells us that such groups have no one-subsets  not containing $0$ that generate them. Furthermore, Theorem \ref{Inv} readily shows the same non-existence result concerning divisible groups with a one-subset containing $0$. In particular, neither the additive group of the rationals $\Q$ nor the reals $\R$ contain one-subsets that generate them.
(Both groups do however contain one-subsets. For instance, the group $\Q\cap \Z_3$, a subset of both $\Q$ and $\R$, contains a one-subset that generates
it, using the construction of Theorem \ref{T-nozero}. Here, the subgroup $H$ used for that construction is $\{x\in \Q: |x|_3 < 1\}$.


\begin{thebibliography}{00}


\bibitem{abu}
 {\sc N.~Abu-Ghazalh and N.~Ru\v skuc,}
     On disjoint unions of finitely many copies of the free
monogenic semigroup,
   {\it Semigroup Forum}
  {\bf 87}, no.~1 
      (2013),
    243--256. 

\bibitem{ang}
{\sc V.~Angjelkoska and D.~Dimovski,}
     Additive semigroups of integers. Embedding dimension of
              numerical semigroups,
{\it Pril. Odd. Prir.-Mat. Biotekhnichki Nauki}
{\bf 41}, no.~1
      (2020),
49--55. 

\bibitem{ar}
  {\sc F.~Arias, J.~Borja and C.~Rhenals,}
     The Frobenius problem for numerical semigroups generated by
              sequences of the form $ca^n-d$,
   {\it Semigroup Forum}
  {\bf 107}, no.~3
      (2023),
    581--608. 

\bibitem{assi}
    {\sc A.~Assi, M.~D'Anna and P.~A.~Garc\'ia-S\'anchez,}
     {\it Numerical semigroups and applications,}
    RSME Springer Series,
    vol. 3,
   Second ed., 
 Springer, Cham,
    2020. 

\bibitem{bog}
     {\sc T.~Bogart, C.~O'Neill and K.~Woods,}
     When is a numerical semigroup a quotient?,
   {\it Bull. Aust. Math. Soc.}
  {\bf 109}, no.~1
      (2024),
    67--76. 

\bibitem{bran}
    {\sc M.~B.~Branco, I.~Ojeda and J.~C.~Rosales,}
     Arithmetic varieties of numerical semigroups,
   {\it Results Math.} {\bf 79}, no.~4
      (2024),
    Paper No. 171, 17~p.

\bibitem{bras}
 {\sc M.~Bras-Amor\'os,}
    Numerical semigroups and codes,
 in: {\it Algebraic geometry modeling in information theory},
Ser. Coding Theory Cryptol.,
    vol. 8,  World Sci. Publ., Hackensack, NJ,
      2013,
     pp.~167--218.

\bibitem{cis}
    {\sc C.~Cisto, G.~Failla and F.~Navarra,}
     Generalized numerical semigroups up to isomorphism,
   {\it Comm. Algebra}
  {\bf 53}, no.~11
      (2025),
    4546--4564. 
     

\bibitem{dim}
    {\sc D.~Dimovski,}
    Additive semigroups of integers,
   {\it Makedon. Akad. Nauk. Umet. Oddel. Prirod.-Mat. Nauk. Prilozi}
 {\bf 9}, no.~2 
      (1977),
    21--26. 


\bibitem{ell}
  {\sc S.~Eliahou,}
     Divsets, numerical semigroups and Wilf's conjecture,
   {\it Comm. Algebra}
{\bf 53}, no.~5
      (2025),
2025--2048. 


\bibitem{elm}
     {\sc C.~Elmacioglu, K.~Hilmer,  C.~O'Neill, M.~Okandan and H.~Park-Kaufmann,}
     On the cardinality of minimal presentations of numerical
              semigroups,
   {\it Algebr. Comb.} {\bf 7},
   no.~3
      (2024),
    753--771. 

\bibitem{gar}
     {\sc P.~A.~Garc\'ia-S\'anchez,}
     The isomorphism problem for ideal class monoids of numerical
              semigroups,
   {\it Semigroup Forum}
  {\bf 108}, no.~2 
      (2024),
    365--376. 


\bibitem{hal}
{\sc Brian C. Hall,} 
{\it Lie groups, Lie algebras, and representations.
An elementary introduction.}
2nd edn.
Graduate Texts in Mathematics, 222. Springer, Cham,  2015. 



    

\bibitem{hig}
{\sc J.~C.~Higgins,}
     Subsemigroups of the additive positive integers,
   {\it Fibonacci Quart.}
  {\bf 10}, no.~3 
      (1972),
    225--230. 

\bibitem{how}
    {\sc J.~M.~Howie,}
    {\it Fundamentals of semigroup theory},
London Mathematical Society Monographs. 
Oxford University Press, New York,  1995. 


\bibitem{jim}
{\sc J.~Jim\'enez--Urroz and J.~M.~Tornero,}
    On the computation of the MED closure of a numerical semigroup,
   {\it Semigroup Forum}
  {\bf 111}, no.~1
    (2025),
    144--162.


\bibitem{kap}
      {\sc N.~Kaplan,}
     Counting numerical semigroups,
   {\it Amer. Math. Monthly}
  {\bf 124}, no.~9
      (2017),
    862--875. 

\bibitem{kom}
      {\sc T.~Komatsu and J.~Mu,}
     $p$-Numerical semigroups of Pell triples,
   {\it J. Ramanujan Math. Soc.}
    {\bf 40}, no.~1
    (2025),
    5--22.

\bibitem{lee}
     {\sc K.~Lee and H.~Nam,}
     The genus of a quotient of several types of numerical semigroups,
   {\it Int. J. Number Theory} {\bf 20}, no.~7
      (2024),
    1809--1831. 

\bibitem{oje}
 {\sc I.~Ojeda and J.~C.~Rosales,}
     The multiples of a numerical semigroup,
   {\it Turkish J. Math.}
  {\bf 48}, no.~6
      (2024),
    Art. 5, 1055--1066. 


\bibitem{ros}
 {\sc J.~C.~Rosales and P.~A.~Garc\'ia-S\'anchez,}
     {\it Numerical semigroups},
    De\-ve\-lopments in Mathematics,
    Vol.~20,
Springer, New York,
      2009.

   	\end{thebibliography}
\end{document}